\newtheorem{theorem}{Theorem}[section]
\newtheorem{lemma}[theorem]{Lemma}
\theoremstyle{definition}
\newtheorem{definition}[theorem]{Definition}
\newtheorem{example}[theorem]{Example}
\theoremstyle{remark}
\numberwithin{equation}{section}
\begin{document}

\title{Introducing a Novel Subclass of Harmonic Functions with Close-to-Convex Properties}

\author{Serkan ÇAKMAK\textsuperscript{1} \orcidlink{0000-0003-0368-7672} \and Sibel YALÇIN\textsuperscript{2} \orcidlink{0000-0002-0243-8263}}

\address{\textsuperscript{1}Istanbul Gelisim University, Faculty of Economics, Administrative and Social Sciences, Department of Management Information Systems, Istanbul, Türkiye.}  
\email{secakmak@gelisim.edu.tr}  

\address{\textsuperscript{2}Bursa Uludag University, Faculty of Arts and Sciences, Department of Mathematics, Görükle, Bursa, Türkiye.}  
\email{syalcin@uludag.edu.tr}

\subjclass[2020]{30C45, 30C50}

\date{Manuscript received: August 21, 2024; accepted: January 18, 2025.}


\keywords{Close-to-convex functions, Harmonic functions, Starlike functions, Distortion}

\begin{abstract}
In this paper, we introduce a new subclass of close-to-convex harmonic functions. We present a sufficient coefficient condition for a function to be a member of this class. Furthermore, we establish a distortion theorem. These results lay the groundwork for extending the findings to function classes involving higher-order derivatives.
\end{abstract}

\maketitle

\section{Introduction}

In the realm of harmonic functions, every function $\mathfrak{f}$ belonging to the class $\mathcal{SH}^{0}$ can be expressed as $\mathfrak{f}=\mathfrak{u}+\overline{\mathfrak{v}}$, where

\begin{equation}
\mathfrak{u}(z)=z+\sum_{m=2}^{\infty}u_{m}z^{m},\quad \mathfrak{v}(z)=\sum_{m=2}^{\infty}v_{m}z^{m}
\label{eqH}
\end{equation}
with both functions being analytic in the open unit disk  $\mathbb{E}=\left\{ z\in \mathbb{C}:\left\vert z\right\vert <1\right\}$. Provided that $|\mathfrak{u}^{\prime}(z)| > |\mathfrak{v}^{\prime}(z)|$ in $\mathbb{E}$, $\mathfrak{f}$ is locally univalent and sense-preserving in $\mathbb{E}$. It's noteworthy that when $\mathfrak{v}(z)$ is identically zero, $\mathcal{SH}^{0}$ contracts to class $\mathcal{S}$.

The subclasses of $\mathcal{S}$ that map $\mathbb{E}$ onto starlike and close-to-convex domains, respectively, are denoted by $\mathcal{S^*}$ and $\mathcal{K}$. Similarly, $\mathcal{SH}^{0,*}$ and $\mathcal{KH}^{0}$ represent subclasses of $\mathcal{SH}^{0}$ that map $\mathbb{E}$ onto their respective domains. (For further details, refer to \cite{clunie1984harmonic, owa2002close})

In 2005, Gao and Zhou \cite{gao2005class} introduced the class

\begin{equation*}
\mathcal{K}_s=\left\{\mathfrak{\mathfrak{u}}\in \mathcal{S} : \mathrm{Re}\left\{\frac{z^2\mathfrak{u}^{\prime }(z)}{-\phi(z)\phi(-z)}\right\} >0 \text{ for }z\in \mathbb{E}\right\}.
\end{equation*}

In 2011, Şeker \cite{cseker2011certain} introduced the class  
\begin{equation*}  
\mathcal{K}(k, \gamma)=\left\{\mathfrak{u}\in \mathcal{S} : \mathrm{Re}\left\{\frac{z^k\mathfrak{u}^{\prime }(z)}{\phi_{k}(z)}\right\} > \gamma, \ \ \ 0 \leq \gamma < 1 \text{ and }z\in \mathbb{E}\right\}  
\end{equation*}  
where \( k \in \mathbb{Z}^{+} \), \( \phi(z) \) is an analytic function of the form  
\[
\phi(z)=z+\sum_{m=2}^\infty c_m z^m \in S^{*}\left (\frac{k-1}{k}\right),
\]
and \( \phi_k(z) \), which was previously introduced by Wang et al. \cite{Wang}, is defined as  
\begin{equation}  
    \phi_{k}(z)= \prod_{v=0}^{k-1} \mu^{-v} \phi(\mu^{v}z), \quad \text{where} \quad \mu^{k}=1 \label{gk}  
\end{equation}  
with \( \mu=e^{2\pi i /k} \).

The classes $\mathcal{K}_s$ and $\mathcal{K}(k, \gamma)$, introduced by Gao and Zhou \cite{gao2005class} and Şeker \cite{cseker2011certain}, respectively, focus solely on analytic functions and are independent of the variable $\overline{z}$. This restriction prevents the study of properties of harmonic functions that depend on the variable $\overline{z}$. This leads us to define a new function class that includes the co-analytic part of harmonic functions.

Observe that if \(k = 2\) in the class \(\mathcal{K}(k, \gamma)\), the class \(\mathcal{K}(\gamma)\) studied by Kowalczyk et al. \cite{kowalczyk2010subclass} is obtained. For \(k = 2\) and \(\gamma = 0\), the class \(\mathcal{K}_s\) studied by Gao and Zhou \cite{gao2005class} is obtained. It is clear that the class \(\mathcal{K}(k, \gamma)\) encompasses both classes. The class \(\mathcal{KH}^0(k, \gamma)\), which we will define shortly, covers the class \(\mathcal{K}(k, \gamma)\) since it is defined using harmonic conjugates of analytic functions belonging to the classes \(\mathcal{K}(k, \gamma)\). Therefore, the class \(\mathcal{KH}^0(k, \gamma)\) is a generalization of the \(\mathcal{K}(k, \gamma)\), \(\mathcal{K}_s\), and \(\mathcal{K}(\gamma)\) classes.

This generalization allows for a broader investigation of geometric properties, such as distortion bounds and close-to-convexity, for harmonic functions that depend on $\overline{z}$.

\begin{definition}
The class $\mathcal{KH}^{0}(k,\gamma)$ is defined as the collection of functions $\mathfrak{f}=\mathfrak{u}+\overline{\mathfrak{v}}\in \mathcal{SH}^{0}$ that adhere to the following inequality:
\begin{equation}
\mathrm{Re}\left\{\frac{z^k\mathfrak{u}^{\prime }(z)}{\phi_{k}(z)}-\gamma\right\}>\left\vert \frac{z^k\mathfrak{v}^{\prime }(z)}{\phi_{k}(z)}\right\vert,  \label{AH0}
\end{equation}
where $0 \leq \gamma < 1$ and $\phi_{k}(z)$ is given by \eqref{gk}. 
\end{definition} 

Specifically, when \(\mathfrak{v}(z) \equiv 0\), the class \(\mathcal{KH}^{0}(k, \gamma)\) reduces to the class \(\mathcal{K}(k, \gamma)\). Also, by setting \(\mathfrak{v}(z) \equiv 0\), \(k = 2\) and \(\gamma = 0\), we obtain \(\mathcal{KH}^{0}(2, 0) = \mathcal{K}_s\). The inclusion of the co-analytic term \(\overline{\mathfrak{v}(z)}\) extends these classes, providing a more general framework that accommodates both analytic and co-analytic components. Additionally, when \(\phi(z) = z\), and by appropriately selecting the parameters, the class \(\mathcal{KH}^0(k, \gamma)\) can be reduced to several well-known subclasses of harmonic functions, as outlined below.

\begin{itemize}
    \item [i] \(\mathcal{KH}^{0}(k, 0) = \mathcal{PH}^{0}\) \cite{Ponn2013}.
    \item [ii] \(\mathcal{KH}^{0}(k, \gamma) = \mathcal{PH}^{0}(\gamma)\) \cite{Lia, Lib}.
    \item [ii] \(\mathcal{KH}^{0}(k, 0) = \mathcal{WH}^{0}(0)\) (\cite{Ghosh}).
    \item [iv] \(\mathcal{KH}^{0}(k, \gamma) = \mathcal{WH}^{0}(0, \gamma)\) (\cite{Rajb}).
    \item [v] \(\mathcal{KH}^{0}(k, \gamma) = \mathcal{AH}^{0}(1, 0, \gamma)\) (\cite{Twms}).
    \item [vi] \(\mathcal{KH}^{0}(k, 0) = \mathcal{RH}^{0}(0, 0)\) (\cite{Yasar}).
\end{itemize}

For further details on harmonic function classes defined by differential inequality, see \cite{Bsh,Kalaj,Gh1,Gh2,Gh3,Firoz,SY,SY2,SC,EY,cakmak2022new}.

In this work, we investigate the distortion theorems and coefficient bounds for functions in the class $\mathcal{KH}^{0}(k,\gamma)$ and demonstrate that functions within this class exhibit close-to-convex behavior.

\section{Examples of Functions in the Class $\mathcal{KH}^{0}(k,\gamma)$}

\begin{example}\label{ex1}
    Let $\mathfrak{f} = \mathfrak{u} + \overline{\mathfrak{v}} = z + \frac{1-\gamma}{m}\overline{ z}^m$ and $\phi(z) = z$. For $0 \leq \gamma < 1$ and $\left \vert z \right \vert < 1$, we have
    \begin{align*} 
        \mathrm{Re} \left\{\frac{z^k\mathfrak{u}^{\prime}(z)}{\phi_k(z)} - \gamma\right\} &= 1 - \gamma>(1 - \gamma) \left \vert z \right \vert ^{m-1}=\left\vert \frac{z^k\mathfrak{v}^{\prime}(z)}{\phi_k(z)}\right\vert.
    \end{align*}
  Hence, $\mathfrak{f} \in \mathcal{KH}^{0}(k,\gamma)$.
\end{example}
The following examples can be given for the special case of the parameters in Example \ref{ex1}.

\begin{example}
    Let $\mathfrak{f}  = z +\frac{99}{200} \overline{ z}^2$, $\gamma=\frac{1}{100}$ and $\phi(z) = z$. Then $\mathfrak{f} \in \mathcal{KH}^{0}(k,\frac{33}{100}).$ The unit disk is mapped to a starlike region by the function $f$. The depiction in Figure \ref{fig1} showcases the image of the set $\mathbb{E}$ under the transformation defined by $\mathfrak{f}(z) = z + \frac{99}{200}\overline{z}^2$.
    
 \begin{figure}[h]
    \centering
    \includegraphics[width=1\textwidth]{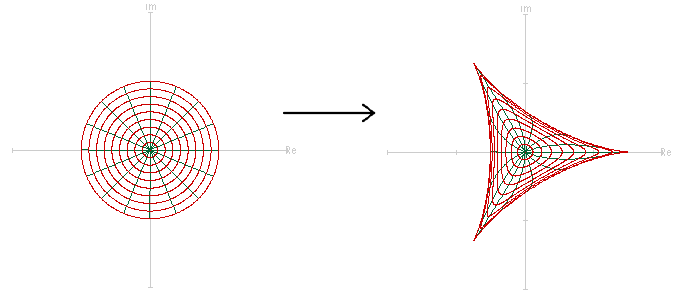} 
    \caption{Under the map $\mathfrak{f} = z + \frac{99}{200}\overline{z}^2$, the image of the unit disk.}
    \label{fig1}
\end{figure}
\end{example}

\begin{example}

Let $\mathfrak{f} = z +\frac{1}{10} \overline{ z}^2$, $\gamma=\frac{4}{5}$, and $\phi(z) = z$. Then $\mathfrak{f} \in \mathcal{KH}^{0}(k,\frac{4}{5})$. The unit disk is mapped to a convex region by the function $f$. The depiction in Figure \ref{fig3} showcases the image of the set $\mathbb{E}$ under the transformation defined by $\mathfrak{f} =  z +\frac{1}{10} \overline{ z}^2$.

    \begin{figure}[htbp]
    \centering
    \includegraphics[width=1\textwidth]{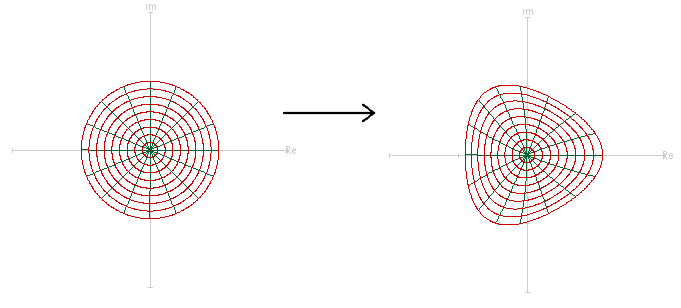}
    \caption{Under the map $\mathfrak{f} = z + \frac{1}{10}\overline{z}^2$, the image of the unit disk.}
    \label{fig3}
\end{figure}
\end{example}
\newpage
\begin{example}
    Let $\mathfrak{f}  = z + \frac{33}{100}\overline{ z}^3$, $\gamma=\frac{1}{100}$ and $\phi(z) = z$. Then $\mathfrak{f} \in \mathcal{KH}^{0}(k,\frac{33}{100}).$ The unit disk is mapped to a starlike region by the function $f$. The depiction in Figure \ref{fig4} showcases the image of the set $\mathbb{E}$ under the transformation defined by $\mathfrak{f} =  z +\frac{33}{100} \overline{ z}^3$.
    
    \begin{figure}[htbp]
    \centering
    \includegraphics[width=1\textwidth]{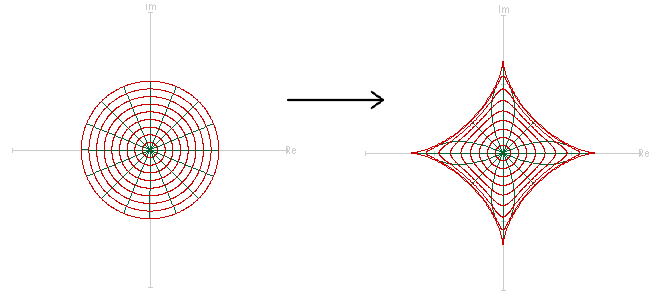}
    \caption{Under the map $\mathfrak{f} = z + \frac{33}{100}\overline{z}^3$, the image of the unit disk.}
    \label{fig4}
\end{figure}

\end{example}

\begin{example}
    Let $\mathfrak{f}  = z + \frac{1}{15}\overline{ z}^3$, $\gamma=\frac{4}{5}$ and $\phi(z) = z$. Then $\mathfrak{f} \in \mathcal{KH}^{0}(k,\frac{4}{5}).$ The unit disk is mapped to a convex region by the function $f$. The depiction in Figure \ref{fig6} showcases the image of the set $\mathbb{E}$ under the transformation defined by $\mathfrak{f} =  z +\frac{1}{15} \overline{ z}^3$.
    
    \begin{figure}[htbp]
    \centering
    \includegraphics[width=1\textwidth]{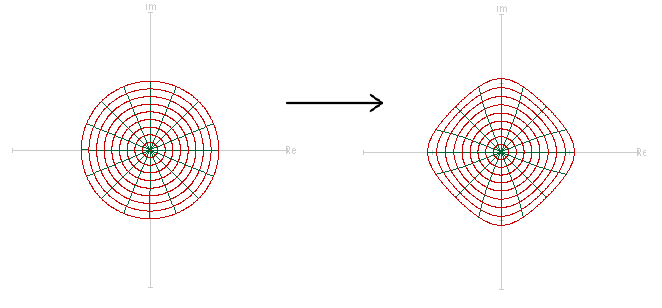}
    \caption{Under the map $\mathfrak{f} = z + \frac{1}{15}\overline{z}^3$, the image of the unit disk.}
    \label{fig6}
\end{figure}
\end{example}
\newpage
\begin{example}
    Let $\mathfrak{f}  = z +\frac{99}{500} \overline{ z}^5$, $\gamma=\frac{1}{100}$ and $\phi(z) = z$. Then $\mathfrak{f} \in \mathcal{KH}^{0}(k,\frac{99}{500}).$ The unit disk is mapped to a starlike region by the function $f$. The depiction in Figure \ref{fig7} showcases the image of the set $\mathbb{E}$ under the transformation defined by $\mathfrak{f} =  z +\frac{99}{500} \overline{ z}^5$.
    
    \begin{figure}[htbp]
    \centering
    \includegraphics[width=1\textwidth]{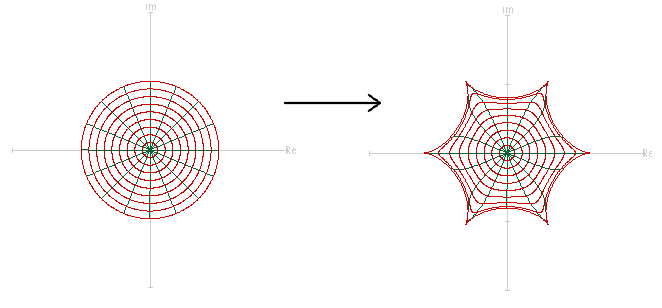}
    \caption{Under the map $\mathfrak{f} = z + \frac{99}{500}\overline{z}^5$, the image of the unit disk. }
    \label{fig7}
\end{figure}
\end{example}

\begin{example}
    Let $\mathfrak{f}  = z + \frac{1}{25}\overline{ z}^5$, $\gamma=\frac{4}{5}$ and $\phi(z) = z$. Then $\mathfrak{f} \in \mathcal{KH}^{0}(k,\frac{4}{5}).$ The unit disk is mapped to a convex region by the function $f$. 
    The depiction in Figure \ref{fig9} showcases the image of the set $\mathbb{E}$ under the transformation defined by $\mathfrak{f} =  z +\frac{1}{25} \overline{ z}^5$.
    
    \begin{figure}[htbp]
    \centering
    \includegraphics[width=1\textwidth]{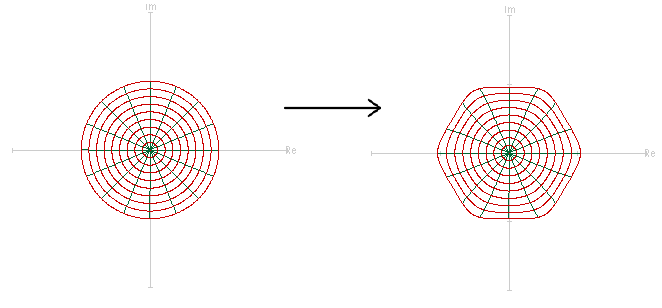}
    \caption{Under the map $\mathfrak{f} = z + \frac{1}{25}\overline{z}^5$, the image of the unit disk.}
    \label{fig9}
\end{figure}
\end{example}

\section{Geometric Properties of the class $\mathcal{KH}^{0}(k,\gamma)$ }

First, we give a result that establishes a sufficient condition for $\mathfrak{f}\in \mathcal{SH}^0$ to be close-to-convex, which comes from Clunie and Sheil-Small \cite{clunie1984harmonic}.

\begin{lemma}
\label{CTC lemma1} Let $\mathfrak{u}$ and $\mathfrak{v}$ be analytic functions in $\mathbb{E}$, such that $|\mathfrak{v}^{\prime}(0)| < |\mathfrak{u}^{\prime}(0)|$, and for each $\varepsilon$ $(|\varepsilon| = 1)$, $F_{\varepsilon }=\mathfrak{u}+\varepsilon \mathfrak{v}$ is close-to-convex. Then, $\mathfrak{f}=\mathfrak{u}+\overline{\mathfrak{v}}$ is close-to-convex in $\mathbb{E}$.
\end{lemma}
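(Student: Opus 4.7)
The plan has two parts: first, to verify that $\mathfrak{f}$ is locally univalent and sense-preserving on $\mathbb{E}$; second, to transfer the close-to-convex property from the one-parameter family $\{F_\varepsilon\}_{|\varepsilon|=1}$ to $\mathfrak{f}$ by matching real parts of directional rotations.

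For the first part, I would use that each $F_\varepsilon$ is univalent (close-to-convex functions are univalent), so $F_\varepsilon'(z) = \mathfrak{u}'(z) + \varepsilon\mathfrak{v}'(z) \neq 0$ on $\mathbb{E}$ for every unimodular $\varepsilon$. If $|\mathfrak{u}'(z_0)| = |\mathfrak{v}'(z_0)|$ held at some $z_0 \in \mathbb{E}$ with $\mathfrak{v}'(z_0) \neq 0$, then choosing $\varepsilon = -\mathfrak{u}'(z_0)/\mathfrak{v}'(z_0)$ would force $F_\varepsilon'(z_0) = 0$, a contradiction; the case $\mathfrak{v}'(z_0) = 0$ is trivial. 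Hence $|\mathfrak{u}'(z)| \neq |\mathfrak{v}'(z)|$ throughout the connected set $\mathbb{E}$, and the hypothesis $|\mathfrak{v}'(0)| < |\mathfrak{u}'(0)|$ together with continuity upgrades this to $|\mathfrak{v}'(z)| < |\mathfrak{u}'(z)|$ on $\mathbb{E}$, which is precisely the sense-preserving condition for $\mathfrak{f} = \mathfrak{u} + \overline{\mathfrak{v}}$.

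For the second part, the key observation is the identity
\[
\mathrm{Re}\bigl(e^{-i\alpha}\mathfrak{f}(z)\bigr) = \mathrm{Re}\bigl(e^{-i\alpha}\mathfrak{u}(z)\bigr) + \mathrm{Re}\bigl(e^{i\alpha}\mathfrak{v}(z)\bigr) = \mathrm{Re}\bigl(e^{-i\alpha}F_{e^{2i\alpha}}(z)\bigr),
\]
valid for every $\alpha \in \mathbb{R}$ since $\mathrm{Re}(e^{-i\alpha}\overline{\mathfrak{v}(z)}) = \mathrm{Re}(e^{i\alpha}\mathfrak{v}(z))$. Consequently, the level sets of the harmonic function $\mathrm{Re}(e^{-i\alpha}\mathfrak{f})$ on $\mathbb{E}$ coincide with those of $\mathrm{Re}(e^{-i\alpha}F_{e^{2i\alpha}})$, which is the real part of an analytic close-to-convex function. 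I would then invoke the classical directional characterization of analytic close-to-convexity: for each $\alpha$, the level curves of $\mathrm{Re}(e^{-i\alpha}F_{e^{2i\alpha}})$ are simple arcs in $\mathbb{E}$ whose endpoints lie on $\partial\mathbb{E}$, equivalently $\mathbb{C}\setminus F_{e^{2i\alpha}}(\mathbb{E})$ is a union of non-crossing rays perpendicular to direction $\alpha$. The identity transfers this property to $\mathfrak{f}$ for every $\alpha$, giving the corresponding ray decomposition of $\mathbb{C}\setminus\mathfrak{f}(\mathbb{E})$, and hence close-to-convexity of $\mathfrak{f}$.

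The main obstacle is the final transfer step: the level-set criterion is well understood for analytic univalent maps, but translating it to a sense-preserving harmonic $\mathfrak{f}$ requires knowing that sense-preservation together with connectedness of directional level sets actually implies global univalence and the ray structure of the image complement. This is the technical heart of the argument, and the reduction $\varepsilon = e^{2i\alpha}$ is what makes it possible to piggyback on the analytic theory rather than redeveloping the geometry in the harmonic category from scratch.
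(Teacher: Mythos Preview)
The paper does not give its own proof of this lemma; it merely states the result and attributes it to Clunie and Sheil-Small \cite{clunie1984harmonic}. Hence there is no in-paper argument to compare against.

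Your proposal is essentially a reconstruction of the original Clunie--Sheil-Small argument, and the key identity
\[
\mathrm{Re}\bigl(e^{-i\alpha}\mathfrak{f}(z)\bigr)=\mathrm{Re}\bigl(e^{-i\alpha}F_{e^{2i\alpha}}(z)\bigr)
\]
is indeed the heart of their proof. Your first part (sense-preservation via nonvanishing of $F_\varepsilon'$) is correct and standard. Your second part correctly identifies the mechanism: each direction $\alpha$ picks out one member of the analytic family, and the real-part level sets transfer. You also correctly flag the genuine technical content, namely that for a sense-preserving harmonic map the property ``convex in every direction'' (each line in a given direction meets the image in a connected set) implies univalence and close-to-convexity; this is precisely what Clunie and Sheil-Small establish via their shear construction, and it is not trivial. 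So your sketch is accurate in spirit, but to make it a complete proof you would need either to cite or to reprove that harmonic directional-convexity lemma rather than leave it as an acknowledged obstacle.
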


The result we will present now establishes a connection between the $\mathcal{KH}^{0}(k, \gamma)$ harmonic function class and the $\mathcal{K}(k, \gamma)$ analytic function class.

\begin{theorem}
\label{CTC theo1}  $\mathfrak{f}=\mathfrak{u}+\overline{\mathfrak{v}}\in \mathcal{KH}^{0}(k,\gamma)$ if and only if $F_{\varepsilon }=\mathfrak{u}+\varepsilon \mathfrak{v}\in \mathcal{K}(k,\gamma)$ for each $\varepsilon \ (\left\vert \varepsilon \right\vert =1)$.
\end{theorem}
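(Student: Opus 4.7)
The plan is to reduce the biconditional to the elementary complex-number identity
\[
\max_{|\varepsilon|=1} \mathrm{Re}(\varepsilon w) \;=\; |w| \qquad (w\in\mathbb{C}),
\]
which is attained by choosing $\varepsilon=\overline{w}/|w|$ whenever $w\neq 0$. This makes the two directions essentially dual to one another: the defining inequality \eqref{AH0} of $\mathcal{KH}^{0}(k,\gamma)$ is exactly the supremum (over $\varepsilon$ on the unit circle) of the analytic inequalities defining $\mathcal{K}(k,\gamma)$ for the functions $F_\varepsilon=\mathfrak{u}+\varepsilon\mathfrak{v}$.

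For the forward implication, I would fix $\mathfrak{f}=\mathfrak{u}+\overline{\mathfrak{v}}\in\mathcal{KH}^{0}(k,\gamma)$ and an arbitrary $\varepsilon$ with $|\varepsilon|=1$. A direct computation gives
\[
\mathrm{Re}\!\left\{\frac{z^k F_\varepsilon^{\prime}(z)}{\phi_k(z)}-\gamma\right\}
= \mathrm{Re}\!\left\{\frac{z^k \mathfrak{u}^{\prime}(z)}{\phi_k(z)}-\gamma\right\} + \mathrm{Re}\!\left\{\varepsilon\,\frac{z^k \mathfrak{v}^{\prime}(z)}{\phi_k(z)}\right\},
\]
and the second summand is bounded below by $-\left|z^k \mathfrak{v}^{\prime}(z)/\phi_k(z)\right|$. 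Applying \eqref{AH0} then yields $\mathrm{Re}\{z^k F_\varepsilon^{\prime}(z)/\phi_k(z)\}>\gamma$ in $\mathbb{E}$, which is the defining inequality of $\mathcal{K}(k,\gamma)$. Univalence of $F_\varepsilon$ is inherited from the close-to-convexity that this inequality forces, and normalization is preserved because $F_\varepsilon$ inherits it directly from $\mathfrak{u}$ and $\mathfrak{v}$.

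For the converse, assume $F_\varepsilon\in\mathcal{K}(k,\gamma)$ for every $\varepsilon$ on the unit circle. I would fix $z\in\mathbb{E}$ and, if $\mathfrak{v}^{\prime}(z)\phi_k(z)\neq 0$, choose the optimal unimodular constant
\[
\varepsilon^{*} \;=\; -\,\overline{\left(\frac{z^k \mathfrak{v}^{\prime}(z)}{\phi_k(z)}\right)}\bigg/\left|\frac{z^k \mathfrak{v}^{\prime}(z)}{\phi_k(z)}\right|,
\]
with the degenerate case $\mathfrak{v}^{\prime}(z)=0$ handled trivially by any $\varepsilon^{*}$. With this choice $\mathrm{Re}\{\varepsilon^{*}\,z^k \mathfrak{v}^{\prime}(z)/\phi_k(z)\}=-\left|z^k \mathfrak{v}^{\prime}(z)/\phi_k(z)\right|$, and feeding the hypothesis $\mathrm{Re}\{z^k F_{\varepsilon^{*}}^{\prime}(z)/\phi_k(z)\}>\gamma$ into the same split identity reproduces \eqref{AH0} at $z$. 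Since $z\in\mathbb{E}$ was arbitrary, \eqref{AH0} holds throughout $\mathbb{E}$. Membership in $\mathcal{SH}^{0}$ is obtained as follows: normalization of $\mathfrak{u},\mathfrak{v}$ is recovered by combining $F_1,F_{-1}\in\mathcal{S}$; sense-preservation $|\mathfrak{v}^{\prime}(z)|<|\mathfrak{u}^{\prime}(z)|$ follows at once from \eqref{AH0}; and univalence of $\mathfrak{f}$ follows from Lemma \ref{CTC lemma1} applied to the family $\{F_\varepsilon\}_{|\varepsilon|=1}$ of close-to-convex analytic functions.

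The substantive core of the argument is a single application of the pointwise identity above; the only mild obstacle is bookkeeping — verifying the normalization, sense-preservation, and univalence conditions on both sides so that the equivalence is genuinely between the two \emph{classes}, not just between the two inequalities.
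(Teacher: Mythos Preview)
Your proposal is correct and follows essentially the same route as the paper's own proof: split $\mathrm{Re}\{z^k F_\varepsilon'(z)/\phi_k(z)\}$ into the $\mathfrak{u}$-part and the $\varepsilon\mathfrak{v}$-part, bound $\mathrm{Re}(\varepsilon w)$ by $-|w|$ in one direction, and choose $\varepsilon$ so that this bound is attained in the other. Your write-up is in fact more careful than the paper's: you make explicit the optimal $\varepsilon^{*}$ that the paper refers to only as ``an appropriate $\varepsilon$'', you handle the degenerate case $\mathfrak{v}'(z)=0$, and you address the normalization, sense-preservation, and univalence conditions required for genuine class membership, which the paper's proof omits entirely.
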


\begin{proof}
Assume $\mathfrak{f}=\mathfrak{u}+\overline{\mathfrak{v}}\in \mathcal{KH}^{0}(k,\gamma)$. For each $\varepsilon \ (\left\vert \varepsilon \right\vert =1)$, we have
\begin{align*}
& {\text{Re}}\left \{\frac{z^k{F_{\varepsilon}}^{\prime }(z)}{\phi_{k}(z)}\right \} \\
& ={\text{Re}}\left \{\frac{z^k{\mathfrak{u}}^{\prime }(z)}{\phi_{k}(z)}\right \}+\varepsilon {\text{Re}}\left \{\frac{z^k{\mathfrak{v}}^{\prime }(z)}{\phi_{k}(z)}\right \} \\
& >{\text{Re}}\left \{\frac{z^k{\mathfrak{u}}^{\prime }(z)}{\phi_{k}(z)}\right \} -\left\vert \frac{z^k{\mathfrak{v}}^{\prime }(z)}{\phi_{k}(z)}\right\vert >\gamma \text{ }%
\left( z\in \mathbb{E}\right) .
\end{align*}
Hence, $F_{\varepsilon }\in \mathcal{K}(k,\gamma)$ for each $\varepsilon \ (\left\vert \varepsilon \right\vert =1)$.

Conversely, suppose $F_{\varepsilon }=\mathfrak{u}+\varepsilon \mathfrak{v}\in \mathcal{K}(k,\gamma)$. Then,
\begin{equation*}
{\text{Re}}\left \{\frac{z^k{\mathfrak{u}}^{\prime }(z)}{\phi_{k}(z)}\right \} >{\text{Re}}\left[ -\varepsilon \frac{z^k{\mathfrak{v}}^{\prime }(z)}{\phi_{k}(z)} %
\right] +\gamma \text{ }\left( z\in \mathbb{E}\right) .
\end{equation*}
Choosing an appropriate $\varepsilon \ (\left\vert \varepsilon \right\vert =1)$ yields
\begin{equation*}
{\text{Re}}\left \{\frac{z^k{\mathfrak{u}}^{\prime }(z)}{\phi_{k}(z)}-\gamma \right \} >\left\vert \frac{z^k{\mathfrak{v}}^{\prime }(z)}{\phi_{k}(z)}\right\vert \text{ }\left( z\in 
\mathbb{E}\right) ,
\end{equation*}
and thus $\mathfrak{f}\in \mathcal{KH}^{0}(k,\gamma)$.
\end{proof}

We will now demonstrate that harmonic functions belonging to the class $\mathcal{KH}^{0}(k,\gamma)$ map the open unit disk onto a close-to-convex region. To this end, we first state the following lemma and subsequently establish that functions in the class $\mathcal{K}(k,\gamma)$ are close-to-convex within the open unit disk.

\begin{lemma} \cite{Wang}
    Let $\phi(z)=z+\sum_{m=2}^\infty c_m z^m \in \mathcal{S^*}\left ( \frac{k-1}{k} \right )$ for $k \geq 1$. Then,
    \begin{equation}
        \Phi_{k}(z)=\frac{\phi_{k}(z)}{z^{k-1}}=z+\sum_{m=2}^\infty C_m z^m \in S^{*} \label{Gk}
    \end{equation}
    where $\phi_{k}(z)$ is given by \eqref{gk}.
\end{lemma}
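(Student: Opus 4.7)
The plan is to verify starlikeness of $\Phi_k$ by the standard criterion $\mathrm{Re}\{z\Phi_k'(z)/\Phi_k(z)\} > 0$ on $\mathbb{E}$, exploiting the product structure of $\phi_k$ defined in \eqref{gk}. First I would record a normalization check: since $\phi(\mu^v z) = \mu^v z + O(z^2)$, each factor $\mu^{-v}\phi(\mu^v z)$ equals $z + O(z^2)$, so $\phi_k(z) = z^k + O(z^{k+1})$ is analytic in $\mathbb{E}$ with a zero of exact order $k$ at the origin. Dividing by $z^{k-1}$ therefore gives an analytic function with $\Phi_k(0) = 0$ and $\Phi_k'(0) = 1$, matching the stated expansion.

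Next I would take the logarithmic derivative of $\Phi_k(z) = \phi_k(z)/z^{k-1}$ to obtain
\begin{equation*}
\frac{z\Phi_k'(z)}{\Phi_k(z)} \;=\; \frac{z\phi_k'(z)}{\phi_k(z)} - (k-1).
\end{equation*}
Then, differentiating the product $\phi_k(z) = \prod_{v=0}^{k-1}\mu^{-v}\phi(\mu^v z)$ logarithmically turns the right-hand side into a sum of shifted versions of the familiar Koebe quantity:
\begin{equation*}
\frac{z\phi_k'(z)}{\phi_k(z)} \;=\; \sum_{v=0}^{k-1} \frac{\mu^v z\, \phi'(\mu^v z)}{\phi(\mu^v z)}.
\end{equation*}

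Now I would invoke the hypothesis $\phi \in \mathcal{S}^{\ast}\!\bigl(\tfrac{k-1}{k}\bigr)$. Since the unit disk $\mathbb{E}$ is invariant under the rotations $z \mapsto \mu^v z$, each term on the right has real part strictly greater than $(k-1)/k$ on $\mathbb{E}$. Summing the $k$ inequalities yields $\mathrm{Re}\{z\phi_k'(z)/\phi_k(z)\} > k-1$, and subtracting the $(k-1)$ coming from the $z^{-(k-1)}$ factor gives $\mathrm{Re}\{z\Phi_k'(z)/\Phi_k(z)\} > 0$, which is exactly $\Phi_k \in \mathcal{S}^{\ast}$.

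There is essentially no substantive obstacle here: the proof is an orchestrated bookkeeping argument in which the order $(k-1)/k$ of starlikeness imposed on $\phi$ has been tuned precisely so that the $k$ copies arising from the product in \eqref{gk} exactly absorb the $-(k-1)$ correction produced by the division by $z^{k-1}$. The only mildly delicate point worth writing out carefully is the normalization/analyticity step, to justify that $\phi_k$ vanishes to the full order $k$ at the origin so that $\Phi_k$ is holomorphic and properly normalized on $\mathbb{E}$; everything else is the logarithmic-derivative identity followed by a term-by-term application of the hypothesis.
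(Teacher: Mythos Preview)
Your argument is correct. The paper itself does not give a proof of this lemma at all: it simply quotes the result from Wang, Gao and Yuan \cite{Wang} and uses it as a black box in the proof of Theorem~\ref{theo2}. What you have written is essentially the standard proof that appears in the original source: logarithmically differentiate the product in \eqref{gk}, observe that each of the $k$ summands $\mu^{v}z\,\phi'(\mu^{v}z)/\phi(\mu^{v}z)$ has real part exceeding $(k-1)/k$ by the hypothesis on $\phi$ and rotation-invariance of $\mathbb{E}$, and then subtract the $(k-1)$ contributed by the factor $z^{-(k-1)}$. The normalization check you include is the right preliminary step, and no further idea is needed.
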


\begin{theorem} \label{theo2}
    If $F$ is a function in the class $\mathcal{K}(k,\gamma)$, then $F$ is close-to-convex of order $\gamma$ in the region $\mathbb{E}$.
\end{theorem}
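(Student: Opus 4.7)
The plan is to reduce the defining inequality of $\mathcal{K}(k,\gamma)$ to the standard close-to-convex criterion by a clean algebraic manipulation, using the lemma just stated. Recall that $F$ is close-to-convex of order $\gamma$ in $\mathbb{E}$ if there exists a starlike function $g$ with $\mathrm{Re}\{zF'(z)/g(z)\} > \gamma$ for $z \in \mathbb{E}$. So the task is to exhibit such a $g$ starting from the hypothesis $\mathrm{Re}\{z^k F'(z)/\phi_k(z)\} > \gamma$.

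The natural candidate for $g$ is $\Phi_k(z) = \phi_k(z)/z^{k-1}$, which by the preceding lemma lies in $\mathcal{S}^*$ whenever $\phi \in \mathcal{S}^*((k-1)/k)$. First I would simply divide numerator and denominator of $z^k F'(z)/\phi_k(z)$ by $z^{k-1}$ to get
\begin{equation*}
\frac{z^k F'(z)}{\phi_k(z)} = \frac{z F'(z)}{\phi_k(z)/z^{k-1}} = \frac{z F'(z)}{\Phi_k(z)}.
\end{equation*}
This rewriting is valid in $\mathbb{E} \setminus \{0\}$, and the removable singularity at $z=0$ is handled by the expansion $\Phi_k(z) = z + \sum_{m \geq 2} C_m z^m$, which makes the quotient analytic at the origin with value $F'(0)=1$.

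Substituting this identity into the hypothesis immediately gives
\begin{equation*}
\mathrm{Re}\left\{\frac{z F'(z)}{\Phi_k(z)}\right\} > \gamma, \qquad z \in \mathbb{E},
\end{equation*}
with $\Phi_k \in \mathcal{S}^*$. This is precisely the classical definition of close-to-convex of order $\gamma$, completing the argument.

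There is essentially no obstacle here; the proof is almost entirely bookkeeping once the lemma on $\Phi_k$ is in hand. The only mild point worth mentioning in the write-up is ensuring the reader sees that the factor $z^{k-1}$ cancels cleanly and that $\Phi_k$ is the starlike ``dominator'' required by the close-to-convexity definition.
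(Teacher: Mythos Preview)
Your proof is correct and follows essentially the same route as the paper's: rewrite $z^kF'(z)/\phi_k(z)$ as $zF'(z)/\Phi_k(z)$ and invoke the lemma that $\Phi_k\in\mathcal{S}^*$ to match the definition of close-to-convex of order $\gamma$. Your added remark about the removable singularity at $z=0$ is a nice touch but not present in the paper's brief argument.
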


\begin{proof}
Let $F\in \mathcal{K}(k,\gamma)$. We have
\begin{equation*}
    {\text{Re}}\left \{\frac{z^k{F}^{\prime }(z)}{\phi_{k}(z)} \right \}={\text{Re}}\left \{\frac{z{F}^{\prime }(z)}{\Phi_{k}(z)} \right \}>\gamma,
\end{equation*}
where $\Phi_{k}(z)$ is given by \eqref{Gk}. Therefore, the function $F$ is close-to-convex of order $\gamma$ since $\Phi_{k}(z) \in \mathcal{S^*}$.
\end{proof}

\begin{theorem}
    Every function in the class $\mathcal{KH}^{0}(k,\gamma)$ is close-to-convex within the region $\mathbb{E}$.
\end{theorem}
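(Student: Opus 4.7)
The plan is to chain the three prior results together. Given $\mathfrak{f}=\mathfrak{u}+\overline{\mathfrak{v}}\in \mathcal{KH}^{0}(k,\gamma)$, I will first invoke Theorem \ref{CTC theo1} to transfer the harmonic membership to the analytic membership $F_{\varepsilon}=\mathfrak{u}+\varepsilon\mathfrak{v}\in\mathcal{K}(k,\gamma)$ for every $\varepsilon$ on the unit circle. Then Theorem \ref{theo2} immediately tells me that each such $F_{\varepsilon}$ is close-to-convex (of order $\gamma$) in $\mathbb{E}$. Finally, I will apply the Clunie--Sheil-Small criterion (Lemma \ref{CTC lemma1}) to conclude that $\mathfrak{f}$ itself is close-to-convex in $\mathbb{E}$.

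The only hypothesis of Lemma \ref{CTC lemma1} that is not handed to me for free by the chain above is the strict inequality $|\mathfrak{v}^{\prime}(0)|<|\mathfrak{u}^{\prime}(0)|$. This, however, is automatic from the normalization \eqref{eqH}: since $\mathfrak{u}(z)=z+\sum_{m\geq 2}u_{m}z^{m}$ and $\mathfrak{v}(z)=\sum_{m\geq 2}v_{m}z^{m}$, we have $\mathfrak{u}^{\prime}(0)=1$ and $\mathfrak{v}^{\prime}(0)=0$, so the required inequality $0<1$ is trivial. Thus the Clunie--Sheil-Small lemma applies without any extra work.

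There is essentially no main obstacle in this argument; all the substantive content has already been packaged into Theorems \ref{CTC theo1} and \ref{theo2} and Lemma \ref{CTC lemma1}. The proof amounts to a short three-line deduction: (i) membership in $\mathcal{KH}^{0}(k,\gamma)$ gives $F_{\varepsilon}\in\mathcal{K}(k,\gamma)$ for all unimodular $\varepsilon$; (ii) each $F_{\varepsilon}$ is close-to-convex in $\mathbb{E}$; (iii) the normalization plus Clunie--Sheil-Small yields close-to-convexity of $\mathfrak{f}$. The only point worth stating explicitly for the reader is the verification of the derivative-at-zero condition, since it is the one hypothesis of Lemma \ref{CTC lemma1} that is not literally restated in the statement being proved.
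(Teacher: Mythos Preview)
Your proposal is correct and follows exactly the same three-step chain as the paper's own proof: Theorem \ref{CTC theo1} gives $F_{\varepsilon}\in\mathcal{K}(k,\gamma)$, Theorem \ref{theo2} gives close-to-convexity of each $F_{\varepsilon}$, and Lemma \ref{CTC lemma1} transfers this to $\mathfrak{f}$. Your explicit check of the condition $|\mathfrak{v}'(0)|<|\mathfrak{u}'(0)|$ via the normalization \eqref{eqH} is a nice addition that the paper leaves implicit.
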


\begin{proof}
Let $\mathfrak{f}=\mathfrak{u}+\overline{\mathfrak{v}}$ belong to class $\mathcal{KH}^{0}(k,\gamma)$. Then by Theorem \ref{CTC theo1} the function $F_{\varepsilon }=\mathfrak{u}+\varepsilon \mathfrak{v}$ belongs to class $\mathcal{K}(k,\gamma)$  and by Theorem \ref{theo2} also close to convex in $\mathbb{E}$. Therefore, by Lemma \ref{CTC lemma1}, $\mathfrak{f}=\mathfrak{u}+\overline{\mathfrak{v}}\in \mathcal{KH}^{0}(k,\gamma)$ is also close to convex in $\mathbb{E}$.
\end{proof}

In the following result, we derive a coefficient bound for functions belonging to the class \(\mathcal{KH}^0(k,\gamma)\).

\begin{theorem}\label{thecoef}
Let $\mathfrak{f}=\mathfrak{u}+\overline{\mathfrak{v}}\in \mathcal{KH}^{0}(k,\gamma)$. For $m\geq 2$, the following inequalities hold:
\begin{align*}
 & \ \left\vert u_{m}\right\vert +\left\vert v_{m}\right\vert \leq \gamma+m(1-\gamma).
\end{align*}%
For the function $\mathfrak{f}(z)=z+[\gamma+m(1-\gamma)] z^{m}$, every outcome is sharp and every equality is holds.
\end{theorem}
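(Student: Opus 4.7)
The plan is to reduce the harmonic coefficient problem to its analytic counterpart via Theorem~\ref{CTC theo1}. Assuming $\mathfrak{f}=\mathfrak{u}+\overline{\mathfrak{v}}\in\mathcal{KH}^{0}(k,\gamma)$, that theorem supplies, for every unimodular $\varepsilon$, a function $F_\varepsilon=\mathfrak{u}+\varepsilon\mathfrak{v}\in\mathcal{K}(k,\gamma)$, whose Taylor expansion reads $F_\varepsilon(z)=z+\sum_{m\geq 2}(u_m+\varepsilon v_m)z^m$. If one can prove the analytic bound $|a_m|\le\gamma+m(1-\gamma)$ for every $F(z)=z+\sum_m a_m z^m\in\mathcal{K}(k,\gamma)$, then applying it to $F_\varepsilon$ yields $|u_m+\varepsilon v_m|\le\gamma+m(1-\gamma)$ uniformly in $\varepsilon$, and selecting $\varepsilon$ so that $\arg(\varepsilon v_m)=\arg(u_m)$ converts the left-hand side into $|u_m|+|v_m|$, giving the claimed inequality.

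The principal step is therefore the analytic coefficient estimate. Using the identity $z^k F'(z)/\phi_{k}(z)=zF'(z)/\Phi_{k}(z)$ observed in the proof of Theorem~\ref{theo2}, the membership $F\in\mathcal{K}(k,\gamma)$ translates into a representation $zF'(z)=\Phi_{k}(z)\bigl[\gamma+(1-\gamma)p(z)\bigr]$, where $\Phi_{k}\in\mathcal{S}^{*}$ (by the cited lemma) and $p$ is analytic in $\mathbb{E}$ with positive real part and $p(0)=1$. Writing $\Phi_{k}(z)=z+\sum_{m\ge 2}C_m z^m$ and $p(z)=1+\sum_{j\ge 1}p_j z^j$, the classical inequalities $|C_m|\le m$ (starlike bound) and $|p_j|\le 2$ (Carath\'eodory) are at our disposal. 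Comparing coefficients of $z^m$ on both sides for $m\ge 2$ and setting $C_1:=1$ gives
\begin{equation*}
m\,a_m=C_m+(1-\gamma)\sum_{j=1}^{m-1}C_{m-j}p_j,
\end{equation*}
so the triangle inequality together with $\sum_{j=1}^{m-1}(m-j)=m(m-1)/2$ furnishes $|a_m|\le 1+(1-\gamma)(m-1)=\gamma+m(1-\gamma)$.

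Sharpness is addressed by direct inspection of the asserted extremal $\mathfrak{f}(z)=z+[\gamma+m(1-\gamma)]z^m$: here $u_m=\gamma+m(1-\gamma)$ and $v_m=0$, so equality holds at the $m$th level. The only mildly delicate point in this plan is the bookkeeping in the Cauchy product $\Phi_{k}(z)p(z)$ and the careful split of the constant term $\gamma$ from the Carath\'eodory part; once that is handled, the passage from the analytic bound to the harmonic one via Theorem~\ref{CTC theo1} and the optimal choice of $\varepsilon$ is essentially a one-line deduction.
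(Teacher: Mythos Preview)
Your argument is correct and mirrors the paper's proof almost step for step: reduce to the analytic class via Theorem~\ref{CTC theo1}, write $zF_\varepsilon'(z)=\Phi_k(z)[\gamma+(1-\gamma)p(z)]$, compare coefficients, and apply the starlike bound $|C_m|\le m$ together with the Carath\'eodory bound $|p_j|\le 2$ before taking the supremum over unimodular $\varepsilon$. The only cosmetic difference is that you make the optimal choice of $\varepsilon$ explicit, whereas the paper simply appeals to the arbitrariness of $\varepsilon$.
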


\begin{proof}
Suppose that $\mathfrak{f}=\mathfrak{u}+\overline{\mathfrak{v}}\in 
\mathcal{KH}^{0}(k,\gamma)$. $ F_{\varepsilon}=\mathfrak{u}+\varepsilon \mathfrak{v}\in \mathcal{K}(k,\gamma)\ $for $\varepsilon $ $\left( \left\vert \varepsilon \right\vert =1\right) $, according to Theorem \ref{CTC theo1}. With respect to every $\varepsilon \ (\left\vert \varepsilon \right\vert =1),$ we possess
 
\begin{equation*}
{\text{Re}}\left \{\frac{z^k{F_{\varepsilon }}^{\prime }(z)}{\phi_{k}(z)} \right \}={\text{Re}}\left \{\frac{z{F_{\varepsilon }}^{\prime }(z)}{\Phi_{k}(z)} \right \}={\text{Re}}\left \{\frac{z\left({\mathfrak{u}}^{\prime }(z)+\varepsilon \mathfrak{v}^{\prime }(z)\right)}{\Phi_{k}(z)} \right \}>\gamma.
\end{equation*}%
for $z \in \mathbb{E}$. On the other hand, there is an analytic function $\mathfrak{P}(z)=1+\underset{m=1}{\overset{\infty }{\sum }}\mathfrak{p}_{m}z^{m}$ in $\mathbb{E}$ whose real part is positive, satisfying
\begin{equation}
\frac{z\left({\mathfrak{u}}^{\prime }(z)+\varepsilon \mathfrak{v}^{\prime }(z)\right)}{\Phi_{k}(z)}=\gamma +\left( 1-\gamma  \right) \mathfrak{P}(z).
\end{equation}%
or
\begin{equation}
{z\left({\mathfrak{u}}^{\prime }(z)+\varepsilon \mathfrak{v}^{\prime }(z)\right)}=\left[\gamma +\left( 1-\gamma  \right) \mathfrak{P}(z)\right]\Phi_{k}(z).
\label{eqp}
\end{equation}%
Upon comparing the coefficients in (\ref{eqp}), it can be observed that
\begin{equation}
m (u_{m}+\varepsilon v_{m})=C_m+(1-\gamma)\mathfrak{p}_{m-1}+(1-\gamma)\mathfrak{p}_{1}C_{m-1}+\dots+(1-\gamma)\mathfrak{p}_{m-2}C_{2}. \label{eqpp}
\end{equation}%
Since $\Phi_{k}(z)$ is starlike, we have $\left \vert C_{m}  \right \vert\leq m$, and since $Re\{\mathfrak{P}(z)\}>0$, we have $\left\vert \mathfrak{p}_{m}\right\vert \leq 2$ for $m\geq 1$. Hence, by equation \ref{eqpp}, we have
\begin{equation}
m \left \vert u_{m}+\varepsilon v_{m}\right \vert \leq m\left [\gamma+m(1-\gamma)\right ].
\end{equation}
Since $\varepsilon
\left( \left\vert \varepsilon \right\vert =1\right) $ is arbitrary, it follows that the proof is concluded. The function $%
\mathfrak{f}(z)=z+$ $[\gamma+m(1-\gamma)] z^{m}$,
demonstrates the sharpness of inequality.
\end{proof}

Now, we provide the necessary coefficient condition for a harmonic function to belong to the class $\mathcal{KH}^{0}(k,\gamma)$.

\begin{theorem}
Let $\mathfrak{f}=\mathfrak{u}+\overline{\mathfrak{v}}\in \mathcal{SH}^{0}$ with the series expansions given by (\ref{eqH}). If the following inequality holds:
\begin{equation}
\sum\limits_{m=2}^{\infty }2m\left(
\left\vert u_{m}\right\vert +\left\vert v_{m}\right\vert \right)+\sum\limits_{m=2}^{\infty }\left( \left \vert 1-2\gamma \right \vert +1\right)
\left\vert C_{m}\right\vert  \leq 2(1-\gamma),  \label{coeffcond}
\end{equation}
then $\mathfrak{f}\in \mathcal{KH}^{0}(k,\gamma)$.
\end{theorem}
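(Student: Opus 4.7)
The plan is to combine Theorem \ref{CTC theo1} with the standard real-part characterization
\[
\mathrm{Re}(p(z)) > \gamma \iff |p(z) - 1| < |p(z) + 1 - 2\gamma|
\]
to convert the claim into a pointwise modulus inequality, and then derive it by elementary triangle-inequality estimates on the power series.

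First I would fix $\varepsilon$ with $|\varepsilon| = 1$ and set $F_{\varepsilon} = \mathfrak{u} + \varepsilon\mathfrak{v}$. By Theorem \ref{CTC theo1}, the target reduces to showing $F_{\varepsilon} \in \mathcal{K}(k,\gamma)$ for every such $\varepsilon$. Writing $\Phi_{k}(z) = \phi_{k}(z)/z^{k-1}$ and using that $\Phi_{k} \in \mathcal{S}^{*}$ is nonzero on $\mathbb{E} \setminus \{0\}$, this is exactly $\mathrm{Re}\bigl\{zF_{\varepsilon}'(z)/\Phi_{k}(z)\bigr\} > \gamma$ on $\mathbb{E}$. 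Applying the equivalence above to $p = zF_{\varepsilon}'/\Phi_{k}$ and clearing the denominator $\Phi_{k}$ yields the equivalent pointwise inequality
\[
\bigl|zF_{\varepsilon}'(z) - \Phi_{k}(z)\bigr| < \bigl|zF_{\varepsilon}'(z) + (1-2\gamma)\Phi_{k}(z)\bigr|, \qquad z \in \mathbb{E}.
\]

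I would then expand in power series. Since $zF_{\varepsilon}'(z) = z + \sum_{m \geq 2} m(u_{m} + \varepsilon v_{m})z^{m}$ and $\Phi_{k}(z) = z + \sum_{m \geq 2} C_{m} z^{m}$, the linear terms cancel on the left while the right-hand side has leading term $2(1-\gamma)z$. Applying the triangle inequality on the left and the reverse triangle inequality on the right, and using $|\varepsilon| = 1$, the required inequality follows from
\[
\sum_{m \geq 2}\bigl[\,2m(|u_{m}|+|v_{m}|) + (1 + |1-2\gamma|)\,|C_{m}|\,\bigr]\,|z|^{m-1} \leq 2(1-\gamma),
\]
which for $|z| < 1$ is an immediate consequence of hypothesis \eqref{coeffcond}.

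The only step that calls for attention is the lower bound for the right-hand modulus: because $1 - 2\gamma$ changes sign at $\gamma = 1/2$, one must use the estimate $\bigl|m(u_{m} + \varepsilon v_{m}) + (1-2\gamma)C_{m}\bigr| \leq m(|u_{m}| + |v_{m}|) + |1-2\gamma|\,|C_{m}|$ with the absolute value on $1 - 2\gamma$. This sign issue is precisely why the coefficient $|1-2\gamma| + 1$, rather than $2(1-\gamma) = 2 - 2\gamma$, appears in \eqref{coeffcond}, and it is the main (and essentially only) subtlety of the argument.
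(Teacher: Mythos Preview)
Your proposal is correct and follows essentially the same route as the paper: both reduce via Theorem \ref{CTC theo1} to the modulus inequality $|zF_{\varepsilon}'-\Phi_{k}|<|zF_{\varepsilon}'+(1-2\gamma)\Phi_{k}|$, expand in power series, and bound the difference by triangle and reverse-triangle estimates to obtain exactly the expression in \eqref{coeffcond}. Your write-up is in fact more explicit than the paper's about why that modulus inequality is equivalent to $\mathrm{Re}\{zF_{\varepsilon}'/\Phi_{k}\}>\gamma$ and why $|1-2\gamma|$ rather than $1-2\gamma$ must appear.
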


\begin{proof}
Consider $\mathfrak{u}$ and $\mathfrak{v}$ as functions with series expansions given by (\ref{eqH}). Let $F_{\varepsilon }=\mathfrak{u}+\varepsilon \mathfrak{v}$ for each $\varepsilon \ (\left \vert \varepsilon \right \vert =1)$. Define the functions $\phi_{k}(z)$ and $\Phi_{k}(z)$ as given by (\ref{gk}) and (\ref{Gk}), respectively. Then we can express $A$ as

\begin{eqnarray*}
  A &=&  \left \vert zF_{\varepsilon }^{\prime}-\frac{\phi_{k}(z)}{z^{k-1}} \right \vert - \left \vert zF_{\varepsilon }^{\prime}+\frac{(1-2\gamma) \phi_{k}(z)}{z^{k-1}} \right \vert \\
  &=& \left \vert z(\mathfrak{u}+\varepsilon \mathfrak{v})^{\prime}-\Phi_{k}(z) \right \vert - \left \vert z(\mathfrak{u}+\varepsilon \mathfrak{v})^{\prime}+(1-2\gamma) \Phi_{k}(z) \right \vert \\
  &=& \left \vert \sum_{m=2}^{\infty} m (u_{m} +\varepsilon v_{m}) z^m - \sum_{m=2}^{\infty} C_m z^m \right \vert \\ 
  &-& \left \vert (2-2\gamma)z+ \sum_{m=2}^{\infty} m (u_{m} +\varepsilon v_{m}) z^m + (1-2\gamma) \sum_{m=2}^{\infty} C_m z^m \right \vert\\
  &\leq & \sum_{m=2}^{\infty} m \left \vert u_{m} +\varepsilon v_{m}\right \vert \left \vert z \right \vert^m + \sum_{m=2}^{\infty} \left \vert C_m \right \vert \left \vert z \right \vert^m \\
  &-& \left ( (2-2\gamma)\left \vert z \right \vert - \sum_{m=2}^{\infty} m \left \vert u_{m} +\varepsilon v_{m}\right \vert \left \vert z \right \vert^m - \left \vert 1-2\gamma \right \vert\sum_{m=2}^{\infty} \left \vert C_m \right \vert \left \vert z \right \vert^m\right)\\
  & < & \left \{ -2(1-\gamma) +\sum_{m=2}^{\infty} 2m \left \vert u_{m} +\varepsilon v_{m}\right \vert +\left(\left \vert 1-2\gamma \right \vert +1 \right) \sum_{m=2}^{\infty} \left \vert C_m \right \vert \right \} \left \vert z \right \vert
\end{eqnarray*}

Since $\varepsilon
\left( \left\vert \varepsilon \right\vert =1\right) $ is arbitrary, and from the inequality (\ref{coeffcond}), we obtain that $A<0$. This implies that $F_{\varepsilon }=\mathfrak{u}+ \varepsilon \mathfrak{v}$ belongs to the class $\mathcal{K}(k,\gamma)$, and consequently, according to Theorem 2, it shows that $\mathfrak{f}=\mathfrak{u}+\overline{\mathfrak{v}}$ belongs to the class $\mathcal{KH}^{0}(k,\gamma)$.
\end{proof}

The result we present now provides the distortion bounds for functions in the class $\mathcal{KH}^{0}(k,\gamma)$.

\begin{theorem}
Assuming $\mathfrak{f}=\mathfrak{u}+\overline{\mathfrak{v}}\in \mathcal{KH}^{0}(k,\gamma)$, the following inequalities hold for all $z$:
\begin{equation*}
    \left\vert z\right\vert +\sum\limits_{m=2}^{\infty } (-1)^{m-1} [m(1-\gamma)+\gamma] \left \vert z  \right \vert ^{m} \leq \left\vert \mathfrak{f}(z)\right\vert \leq \left\vert z\right\vert +\sum\limits_{m=2}^{\infty } [m(1-\gamma)+\gamma] \left \vert z  \right \vert ^{m}.
\end{equation*}
These are sharp inequality for the function $\mathfrak{f}(z)=z+\sum\limits_{m=2}^{\infty }[m(1-\gamma)+\gamma]z^{m}$.
\end{theorem}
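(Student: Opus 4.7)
The plan is to deduce both estimates from the coefficient bound $|u_m|+|v_m|\le m(1-\gamma)+\gamma$ furnished by Theorem \ref{thecoef}. Writing $a_m:=m(1-\gamma)+\gamma$ for brevity, the upper estimate falls out of two applications of the triangle inequality:
\[
|\mathfrak{f}(z)|\le|\mathfrak{u}(z)|+|\mathfrak{v}(z)|\le|z|+\sum_{m=2}^{\infty}(|u_m|+|v_m|)|z|^m\le|z|+\sum_{m=2}^{\infty}a_m|z|^m,
\]
which is exactly the claimed majorant; evaluating the candidate extremal $\mathfrak{f}_0(z):=z+\sum_{m\ge 2}a_m z^m$ at $z=r>0$ then witnesses sharpness.

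For the lower bound, the direct reverse triangle inequality only produces
\[
|\mathfrak{f}(z)|\ge|z|-\sum_{m\ge 2}(|u_m|+|v_m|)|z|^m\ge|z|-\sum_{m\ge 2}a_m|z|^m,
\]
which is weaker than the target $|z|+\sum_{m\ge 2}(-1)^{m-1}a_m|z|^m$. I would reach the sharper alternating form by observing that $|\mathfrak{f}_0(-r)|=r+\sum_{m\ge 2}(-1)^{m-1}a_m r^m$ (whenever this quantity is positive), so the correct target on each circle $|z|=r$ is the modulus of the extremal function at $z=-r$. Two natural ways to secure this are: (i) a phase-selection argument in which one writes $z=re^{i\theta}$ and chooses $\theta$ so that the higher-order Taylor contributions align anti-parallel to the leading term, which is precisely the substitution that produces the alternating signs; or (ii) a Koebe-type growth estimate, available here because Theorem \ref{CTC theo1}, Theorem \ref{theo2} and Lemma \ref{CTC lemma1} together establish that every $\mathfrak{f}\in\mathcal{KH}^0(k,\gamma)$ is close-to-convex in $\mathbb{E}$.

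The principal obstacle is the lower bound: the triangle inequality is too coarse to see the alternating cancellations, so the delicate step is arguing that the infimum of $|\mathfrak{f}|$ over $|z|=r$ really is controlled by $|\mathfrak{f}_0(-r)|$ rather than by the cruder sum $r-\sum a_m r^m$. Once that is in place, the upper bound, the lower bound, and the stated sharpness via $\mathfrak{f}_0$ all fall into line.
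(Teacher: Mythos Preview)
Your upper-bound argument via the coefficient inequality from Theorem~\ref{thecoef} is valid and is in fact slightly more direct than the paper's route, which first bounds $|F_\varepsilon'|$ and then integrates. So that half is fine.

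The lower bound, however, is not proved: you yourself flag it as the ``principal obstacle'' and then offer two sketches, neither of which closes the gap. The phase-selection idea (i) cannot work as stated: for a \emph{fixed} $\mathfrak f$ you do not get to rotate $z$ so that \emph{all} higher-order terms simultaneously point opposite to the leading term; the alternating signs in $|\mathfrak f_0(-r)|$ are a feature of the particular extremal, not something you can force on an arbitrary member of the class by choosing $\theta$. Idea (ii), invoking close-to-convexity and a Koebe-type bound, is in the right spirit but is not carried out, and the generic harmonic close-to-convex growth theorem does not by itself yield the specific alternating series $\sum(-1)^{m-1}a_m r^m$.

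The paper's argument avoids this difficulty altogether by working at the level of derivatives rather than coefficients. By Theorem~\ref{CTC theo1}, each $F_\varepsilon=\mathfrak u+\varepsilon\mathfrak v$ lies in the analytic class $\mathcal K(k,\gamma)$, for which \c{S}eker's distortion theorem gives the two-sided estimate
\[
\frac{1-(1-2\gamma)r}{(1+r)^3}\le|F_\varepsilon'(z)|\le\frac{1+(1-2\gamma)r}{(1-r)^3},\qquad r=|z|.
\]
Expanding both sides as power series in $r$ produces exactly $1+\sum_{m\ge2}(\pm1)^{m-1}m\,a_m r^{m-1}$. Taking the supremum and infimum over $|\varepsilon|=1$ then yields
\[
|\mathfrak u'(z)|+|\mathfrak v'(z)|\le 1+\sum_{m\ge2}m\,a_m r^{m-1},\qquad
|\mathfrak u'(z)|-|\mathfrak v'(z)|\ge 1+\sum_{m\ge2}(-1)^{m-1}m\,a_m r^{m-1},
\]
and integrating along the radial segment from $0$ to $z$ gives both inequalities in the statement. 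The point is that the alternating signs come for free from the \emph{lower} distortion bound on $|F_\varepsilon'|$; they are not accessible from the coefficient bound $|u_m|+|v_m|\le a_m$ alone, which is why your reverse-triangle-inequality attempt stalls.
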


\begin{proof}
Let $\mathfrak{f}=\mathfrak{u}+\overline{\mathfrak{v}}\in \mathcal{KH}^{0}(k,\gamma).$ Then using Theorem \ref{CTC theo1}\textbf{,} $
F_{\varepsilon }=\mathfrak{u}+ \varepsilon \mathfrak{v}\in \mathcal{K}(k,\gamma)\ $for each $\varepsilon \ ( \left\vert \varepsilon \right\vert =1).$ Additionally, from Theorem \ref{thecoef} in \cite{cseker2011certain}, we obtain
\begin{eqnarray}
    \frac{1-(1-2\gamma)\left \vert z \right \vert}{(1+\left \vert z \right \vert)^3} \leq \left \vert F_{\varepsilon }^{\prime }(z) \right \vert \leq \frac{1+(1-2\gamma)\left \vert z \right \vert}{(1-\left \vert z \right \vert)^3}
\end{eqnarray}
Since 
\begin{eqnarray*}
\left\vert F_{\varepsilon }^{\prime }(z)\right\vert  &=&\left\vert \mathfrak{u}^{\prime }(z)+\varepsilon \mathfrak{v}^{\prime }(z)\right\vert  \\
&\leq &1+ \sum\limits_{m=2}^{\infty } m [m(1-\gamma)+\gamma] \left \vert z \right \vert ^{m-1}
\end{eqnarray*}%
and%
\begin{eqnarray*}
\left\vert F_{\varepsilon }^{\prime }(z)\right\vert  &=&\left\vert \mathfrak{u}^{\prime }(z)+\varepsilon \mathfrak{v}^{\prime }(z)\right\vert  \\
&\geq & 1+ \sum\limits_{m=2}^{\infty } (-1)^{m-1} m [m(1-\gamma)+\gamma] \left \vert z \right \vert ^{m-1},
\end{eqnarray*}%
in particular, we get 
\begin{equation*}
\left\vert \mathfrak{u}^{\prime }(z)\right\vert +\left\vert \mathfrak{v}%
^{\prime }(z)\right\vert \leq 1+ \sum\limits_{m=2}^{\infty } m [m(1-\gamma)+\gamma] \left \vert z \right \vert ^{m-1}
\end{equation*}
and%
\begin{equation*}
\left\vert \mathfrak{u}^{\prime }(z)\right\vert -\left\vert \mathfrak{v}%
^{\prime }(z)\right\vert \geq 1+ \sum\limits_{m=2}^{\infty } (-1)^{m-1} m [m(1-\gamma)+\gamma] \left \vert z \right \vert ^{m-1}.
\end{equation*}
Assume $\Gamma$ is the radial segment extending from $0$ to $z$, so
\begin{eqnarray*}
\left\vert \mathfrak{f}(z)\right\vert  &=&\left\vert \int\limits_{\Gamma }%
\frac{\partial \mathfrak{f}}{\partial \mathfrak{z} }d\mathfrak{z} +\frac{\partial 
\mathfrak{f}}{\partial \bar{\mathfrak{z}}}d\bar{\mathfrak{z}}\right\vert \leq
\int\limits_{\Gamma }\left( \left\vert \mathfrak{u}^{\prime }(\mathfrak{z}
)\right\vert +\left\vert \mathfrak{v}^{\prime }(\mathfrak{z} )\right\vert \right)
\left\vert d\mathfrak{z} \right\vert  \\
&\leq &\int\limits_{0}^{\left\vert z\right\vert }\left( 1+ \sum\limits_{m=2}^{\infty } m [m(1-\gamma)+\gamma] \left \vert \rho  \right \vert ^{m-1}\right) d\mathfrak{\rho } \\
&=&\left\vert z\right\vert +\sum\limits_{m=2}^{\infty }[m(1-\gamma)+\gamma] \left \vert z  \right \vert ^{m},
\end{eqnarray*}
and 
\begin{eqnarray*}
\left\vert \mathfrak{f}(z)\right\vert  &\geq &\int\limits_{\Gamma }\left(
\left\vert \mathfrak{u}^{\prime }(\mathfrak{z} )\right\vert -\left\vert \mathfrak{v}%
^{\prime }(\mathfrak{z} )\right\vert \right) \left\vert d\mathfrak{z} \right\vert  \\
&\geq &\int\limits_{0}^{\left\vert z\right\vert }\left( 1+ \sum\limits_{m=2}^{\infty } (-1)^{m-1} m [m(1-\gamma)+\gamma] \left \vert \rho  \right \vert ^{m-1}\right) d\mathfrak{\rho }
\\
&=&\left\vert z\right\vert +\sum\limits_{m=2}^{\infty } (-1)^{m-1} [m(1-\gamma)+\gamma] \left \vert z  \right \vert ^{m}.
\end{eqnarray*}
\end{proof}

\begin{theorem}
The class $\mathcal{KH}^{0}(k,\gamma)$ is closed under convex combinations.
\end{theorem}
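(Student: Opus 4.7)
The plan is to exploit the fact that the defining condition of $\mathcal{KH}^{0}(k,\gamma)$ has the form $\mathrm{Re}\{L[\mathfrak{u}](z)-\gamma\}>|L[\mathfrak{v}](z)|$, where the operator $L[g](z):=z^{k}g'(z)/\phi_{k}(z)$ is $\mathbb{C}$-linear in $g$. Because $\mathrm{Re}$ is $\mathbb{R}$-linear and $|\cdot|$ is subadditive, any inequality of this shape is automatically stable under convex combinations, which is essentially the entire content of the theorem.

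Concretely, given $\mathfrak{f}_{j}=\mathfrak{u}_{j}+\overline{\mathfrak{v}_{j}}\in\mathcal{KH}^{0}(k,\gamma)$ for $j=1,\dots,n$ and nonnegative weights $t_{j}$ with $\sum_{j=1}^{n}t_{j}=1$, I would set $\mathfrak{f}=\sum_{j}t_{j}\mathfrak{f}_{j}=\mathfrak{u}+\overline{\mathfrak{v}}$ with $\mathfrak{u}=\sum_{j}t_{j}\mathfrak{u}_{j}$ and $\mathfrak{v}=\sum_{j}t_{j}\mathfrak{v}_{j}$; the normalization $\mathfrak{u}'(0)=1$, $\mathfrak{v}'(0)=0$ is immediate. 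The key step is then the chain
\begin{equation*}
\mathrm{Re}\left\{\frac{z^{k}\mathfrak{u}'(z)}{\phi_{k}(z)}-\gamma\right\}=\sum_{j}t_{j}\,\mathrm{Re}\left\{\frac{z^{k}\mathfrak{u}_{j}'(z)}{\phi_{k}(z)}-\gamma\right\}>\sum_{j}t_{j}\left|\frac{z^{k}\mathfrak{v}_{j}'(z)}{\phi_{k}(z)}\right|\geq\left|\frac{z^{k}\mathfrak{v}'(z)}{\phi_{k}(z)}\right|,
\end{equation*}
where the equality uses $\mathbb{R}$-linearity of $\mathrm{Re}$ together with $\sum_{j}t_{j}=1$, the strict inequality is the hypothesis that each $\mathfrak{f}_{j}\in\mathcal{KH}^{0}(k,\gamma)$, and the final step is the triangle inequality.

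A slightly slicker route I might present instead goes through Theorem \ref{CTC theo1}: for every $\varepsilon$ with $|\varepsilon|=1$, the function $F_{\varepsilon}=\mathfrak{u}+\varepsilon\mathfrak{v}$ coincides with the convex combination $\sum_{j}t_{j}(\mathfrak{u}_{j}+\varepsilon\mathfrak{v}_{j})$ of members of $\mathcal{K}(k,\gamma)$, and $\mathcal{K}(k,\gamma)$ is convex because it is the preimage under the linear map $F\mapsto z^{k}F'/\phi_{k}$ of the open half-plane $\{w:\mathrm{Re}\,w>\gamma\}$. Applying Theorem \ref{CTC theo1} in the reverse direction then returns $\mathfrak{f}\in\mathcal{KH}^{0}(k,\gamma)$.

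I do not foresee any genuine obstacle. The one point deserving a brief check is that $\mathfrak{f}$ actually lies in $\mathcal{SH}^{0}$, i.e.\ is locally univalent and sense-preserving; but this is automatic from the established inequality, since $\gamma\geq 0$ implies $\mathrm{Re}\{z^{k}\mathfrak{u}'(z)/\phi_{k}(z)\}>|z^{k}\mathfrak{v}'(z)/\phi_{k}(z)|$ on $\mathbb{E}$, which forces $|\mathfrak{u}'(z)|>|\mathfrak{v}'(z)|$. The whole argument thus amounts to the standard remark that any class defined by a condition of the form $\mathrm{Re}(\cdot)-\gamma>|\cdot|$ that is linear in $(\mathfrak{u},\mathfrak{v})$ is convex in that pair.
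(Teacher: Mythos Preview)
Your proposal is correct and your primary argument---linearity of the operator, $\mathbb{R}$-linearity of $\mathrm{Re}$, and subadditivity of $|\cdot|$ applied to the defining inequality---is exactly the paper's proof. Your side remarks (the alternative route via Theorem~\ref{CTC theo1} and the explicit verification of sense-preservation) go slightly beyond what the paper writes, but the core approach is identical.
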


\begin{proof}
Suppose $\mathfrak{f}_{\alpha}=\mathfrak{u}_{\alpha}+\overline{\mathfrak{v}_{\alpha}}\in 
\mathcal{KH}^{0}(k,\gamma)$ for $\alpha=1,2,...,p$ and $%
\sum\limits_{\alpha=1}^{p}s_{\alpha}=1$ $(0\leq s_{\alpha}\leq 1).$ The convex combination
of functions $\mathfrak{f}_{\alpha}$ $\left( \alpha=1,2,...,p\right) $ can be expressed as:
\begin{equation*}
\mathfrak{f}(z)=\sum\limits_{\alpha=1}^{p}s_{\alpha}\mathfrak{f}_{\alpha}(z)=\mathfrak{u}%
(z)+\overline{\mathfrak{v}(z)},
\end{equation*}%
where 
\begin{equation*}
\mathfrak{u}(z)=\sum\limits_{\alpha=1}^{p}s_{\alpha}\mathfrak{u}_{\alpha}(z)\text{ \ and \ }%
\mathfrak{v}(z)=\sum\limits_{\alpha=1}^{p}s_{\alpha}\mathfrak{v}_{\alpha}(z).
\end{equation*}%
Both $\mathfrak{u}$ and $\mathfrak{v}$ are analytic within the open unit disk $\mathbb{E}$, satisfying initial conditions $ \mathfrak{u}(0)=\mathfrak{v} (0)=\mathfrak{u}^{\prime }(0)-1=\mathfrak{v}^{\prime }(0)=0 $ and
\begin{eqnarray*}
{\text{Re}}\left[ \frac{z^{k}\mathfrak{u}^{\prime }(z)}{\phi_{k}(z)}-\gamma \right] &=&{\text{Re}}\left[ \sum%
\limits_{\alpha=1}^{p}s_{\alpha}\left( \frac{z^{k}\mathfrak{u}_{\alpha}^{\prime }(z)}{\phi_{k}(z)} - \gamma \right) \right] 
>\sum\limits_{\alpha=1}^{p}s_{\alpha}\left\vert \frac{z^{k}\mathfrak{v}_{\alpha}^{\prime }(z)}{\phi_{k}(z)} \right\vert 
= \left\vert \frac{z^{k}\mathfrak{v}^{\prime }(z)}{\phi_{k}(z)}\right\vert
\end{eqnarray*}%
showing that $\mathfrak{f}\in \mathcal{KH}^{0}(k,\gamma)$.
\end{proof}
\section{Conclusions}

In this paper, we introduced a new class of harmonic functions denoted by $\mathcal{KH}^{0}(k, \gamma)$. We established a relationship between $\mathcal{KH}^{0}(k, \gamma)$ and $\mathcal{K}(k, \gamma)$. We demonstrated that $\mathcal{KH}^{0}(k, \gamma)$ is close-to-convex. For functions in the $\mathcal{KH}^{0}(k, \gamma)$ class, we derived coefficient bounds and distortion theorems. Finally, we proved that $\mathcal{KH}^{0}(k, \gamma)$ is closed under convolution.

\bibliographystyle{amsplain}

\end{document}